\newtheorem{theorem}{Theorem}
\newtheorem{prop}[theorem]{Proposition}
\newtheorem{corollary}[theorem]{Corollary}
\newtheorem{lemma}[theorem]{Lemma}
\newtheorem{conjecture}[theorem]{Conjecture}
\theoremstyle{definition}
\newtheorem{remark}[theorem]{Remark}
\begin{document}

\title{Hurwitz orbits of equal size}

\author{Colin Pirillo and Seth Sabar}

\maketitle

\begin{abstract}
We provide a variety of cases in which two factorizations have Hurwitz orbits of the same size. We begin with prototypical results about factorizations of length two, and show that cycling elements or flipping and inverting elements in any factorization preserves Hurwitz orbit size. We prove that ``double reverse" factorizations in groups with special presentations have Hurwitz orbits of equal size, and end with applications to complex reflection groups.
\end{abstract}

\section{Introduction}

The Hurwitz action is a combinatorial action on tuples of group elements, based on basic moves that swap two elements, modifying one, while preserving the product. Its study dates to work of Hurwitz in the late 19th century. This paper is motivated by an observation about the Hurwitz action and the complex reflection group $G_6 = \langle a, b \mid a^3 = b^2 = 1, ababab = bababa \rangle$. We took factorizations containing only $a$, $b$, and $a^{-1}$, and randomly permuted the factors. Permuting elements in these factorizations preserved the size of the their Hurwitz orbits (see Conjecture \ref{conjecture:g6}). This observation raised the question ``In what ways can we reorder or modify the factors in a factorization while preserving the size of its Hurwitz orbit?"

In Section \ref{section:length 2}, we prove that for elements $x$ and $y$ in a group $G$, the Hurwtiz orbits of $(x,y)$, $(x^{-1},y^{-1})$, and $(y,x)$ have equal size. We then generalize these results to longer factorizations in several ways. We prove in Section \ref{section:cycling} that for any $x_1, x_2, \ldots ,x_\ell$ in a group $G$, $(x_1,x_2, \ldots, x_\ell)$ and $(x_2, \ldots, x_\ell, x_1)$ have Hurwitz orbits of equal size. In Section \ref{reverses} we show that $(x_\ell^{-1} \ldots,x_2^{-1}, x_1^{-1})$ belongs to an orbit of the same size as well. In the special case where all $x_1, x_2, \ldots, x_\ell$ have order $1$ or $2$, we see that the Hurwitz orbit of $(x_1,x_2, \ldots, x_\ell)$ has the same size as the orbit of $(x_\ell, x_{\ell-1}, \ldots x_1)$.

In Section \ref{Section:ReverseRelation}, we prove that $(x_1,x_2, \ldots, x_\ell)$ and $(x_\ell, x_{\ell-1}, \ldots x_1)$ have Hurwitz orbits of equal size when the $x_i$ belong to a generating set in a group presentation with ``reversible relations". We end in Section \ref{Section:Applications} with applications to Coxeter groups and Shephard groups, as well as a conjecture about $G_6$.

\subsection*{Acknowledgements}

The research presented was conducted at The George Washington University while we were students at School Without Walls High School, located in Foggy Bottom, Washington DC. We would like to thank Gaurav Gawankar, Dounia Lazreq, and Mehr Rai for helpful conversations which contributed to our understanding of this material. We would especially like to thank Professor Joel Brewster Lewis for his mentorship throughout the research process.

\section{Background}
\label{sec:background}

In a group $G$, a \emph{factorization} of an element $g \in G$ is a tuple of elements in $G$ that multiply to $g$. Given a factorization $(x_1, x_2, \ldots, x_\ell)$ of an element in $G$, a \emph{Hurwitz move} at position $i$, with $1 \leq i \leq \ell -1$, is defined to be the following operation:
\[
\begin{array}{cccl}
(x_1, \ldots, x_{i-1}, &  x_i, & x_{i+1}, & x_{i+2}, \ldots, x_\ell) \overset{\sigma_{i}}{\to} \\
(x_1, \ldots, x_{i-1}, &  x_{i+1}, & x_{i+1}^{-1}x_{i}x_{i+1}, & x_{i+2}, \ldots, x_\ell).
\end{array}
\]
The inverse $\sigma_i^{-1}$ of the Hurwitz move $\sigma_i$ is given by
\[
\begin{array}{cccl}
(x_1, \ldots, x_{i-1}, & x_i, & x_{i+1}, & x_{i+2}, \ldots, x_\ell) \overset{\sigma_{i}^{-1}}{\to} \\
(x_1, \ldots, x_{i-1}, & x_{i}x_{i+1}x_{i}^{-1}, & x_{i}, & x_{i+2}, \ldots, x_\ell).
\end{array}
\]
The \emph{Hurwitz orbit} of a factorization $T$ is the set of factorizations which can be produced by applying Hurwitz moves to $T$. We use the term ``orbit'' because Hurwitz moves give rise to an action of the braid group. The term \emph{size} refers to the cardinality of the Hurwitz orbit.

The first paper to study the Hurwitz action was written by A. Hurwitz in 1891 \cite{Hur1891}. Historically, researchers who have studied the Hurwitz action have been interested in when two factorizations belong to the same Hurwitz orbit. For example, \cite{Kluitmann} and \cite{Ben-Itzhak} answered the question of when two transposition factorizations in the symmetric group belong to the same Hurwitz orbit. \cite{Hou}, \cite{Sia}, and \cite{Berger} answered this same question, instead for factorizations in the dihedral group. Many authors have also considered the case of factorizations of special elements in complex reflection groups or Coxeter groups \cite{Bessis, Lewis, Lewis2, Baum, Igusa, Peterson, GLRS}. Researchers have explored the Hurwitz action from different perspectives as well. In 2019, Muhle and Ripoll studied the Hurwitz action as a connectivity property on posets \cite{Muhle}. Unlike previous papers, our paper is concerned not with the question of when two factorizations belong to the same Hurwitz orbit, but instead when they have Hurwitz orbits of the same size.

\section{Factorizations of length two}
\label{section:length 2}
In this section we prove that $(x,y)$, $(y,x)$, and $(x^{-1},y^{-1})$ have Hurwitz orbits of the same size. This observation on the orbit sizes of length-$2$ factorizations serves as a prototype and motivating example for the more general results in later sections. We begin with a lemma on the general form of $(x,y)$ after any number of Hurwitz moves. 

\begin{lemma}
\label{lemma:xy-form}

Let $G$ be a group, $n$ an integer, and $x,y \in G$. Then 
\begin{enumerate}[label={\rm (\alph*)}]
\item $\sigma^{2n}(x,y) = \left(y^{-1}(x^{-1}y^{-1})^{n-1}x(yx)^{n-1}y, (y^{-1}x^{-1})^{n}y(xy)^{n} \right)$ and
\item $\sigma^{2n+1}(x,y) = \left((y^{-1}x^{-1})^{n}y(xy)^{n},y^{-1}(x^{-1}y^{-1})^{n}x(yx)^{n}y\right)$.
\end{enumerate}
\end{lemma}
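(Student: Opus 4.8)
The plan is to collapse the two displayed formulas into a single one-coordinate recursion. From the definition, the Hurwitz move on a length-$2$ tuple is $\sigma(a,b) = (b,\, b^{-1}ab)$, and it preserves the product, so every tuple in the Hurwitz orbit of $(x,y)$ has product $u := xy$. Writing $c_k$ for the first coordinate of $\sigma^{k}(x,y)$, this forces $\sigma^{k}(x,y) = (c_k,\, c_k^{-1}u)$ for every integer $k$, and the move itself becomes the recursion $c_{k+1} = c_k^{-1}u$, which runs equally well in either direction since $\sigma$ is a bijection.

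Iterating once more gives $c_{k+2} = (c_k^{-1}u)^{-1}u = u^{-1}c_k u$. Since $c_0 = x$ and $c_1 = y$ (the first coordinate of $\sigma(x,y) = (y,\, y^{-1}xy)$), induction from $k = 0$ in both directions yields $c_{2n} = u^{-n}xu^{n}$ and $c_{2n+1} = u^{-n}yu^{n}$ for all integers $n$, hence the compact forms
\[
\sigma^{2n}(x,y) = \bigl(u^{-n}xu^{n},\; u^{-n}x^{-1}u^{n+1}\bigr), \qquad \sigma^{2n+1}(x,y) = \bigl(u^{-n}yu^{n},\; u^{-n}y^{-1}u^{n+1}\bigr).
\]

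It remains to match these with the statement. Expanding $u = xy$ and using $y^{-1}x^{-1} = (xy)^{-1}$ together with the conjugation identities $y^{-1}(yx)^{m} = (xy)^{m}y^{-1}$ and $(yx)^{m}y = y(xy)^{m}$ (both immediate from $(yx)^{m} = y(xy)^{m}y^{-1}$), each of the four coordinates transforms routinely; for instance
\[
y^{-1}(x^{-1}y^{-1})^{n-1}x(yx)^{n-1}y = y^{-1}(yx)^{1-n}x(yx)^{n-1}y = (xy)^{1-n}\,(y^{-1}xy)\,(xy)^{n-1} = (xy)^{-n}x(xy)^{n},
\]
which is $u^{-n}xu^{n}$; two of the coordinates are in fact immediate and the other two need these identities.

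The only real content is the first reduction: once product-invariance is used to pin the pair down by its first coordinate and reduce the Hurwitz move to $c_{k+1} = c_k^{-1}u$, everything after is bookkeeping, and in particular negative $n$ needs no special treatment. (Alternatively one could prove (a) and (b) by a direct simultaneous induction on $n$ --- apply $\sigma$ to the form in (a) to get (b), then $\sigma$ again to get (a) with $n$ increased by one --- but this replaces the clean recursion by longer cancellations and requires the inverse move to handle negative $n$.)
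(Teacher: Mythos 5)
Your proof is correct, and it takes a genuinely different route from the paper's. The paper proves (a) by induction on $n$, applying $\sigma^2$ to the inductive form and expanding directly (using product-preservation only to substitute $ab=xy$ mid-computation), then derives (b) from (a), and dismisses negative $n$ with ``the proof is similar.'' You instead use product-preservation up front to collapse each pair to its first coordinate $c_k$, turn $\sigma$ into the recursion $c_{k+1}=c_k^{-1}u$ with $u=xy$, solve it in closed form ($c_{2n}=u^{-n}xu^n$, $c_{2n+1}=u^{-n}yu^n$), and then reconcile your compact expressions with the paper's via the conjugation identities $(yx)^m = y(xy)^m y^{-1}$. I checked all four coordinate matches and they hold (e.g.\ the second coordinate in (a) is $(xy)^{-n}y(xy)^n = u^{-n}x^{-1}u^{n+1}$). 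What your approach buys: the two-sided recursion makes the claim for negative $n$ genuinely automatic rather than ``similar,'' and the closed form $u^{-n}xu^n$ exposes the underlying structure (iterated conjugation by the product) that the paper's formulas obscure; the cost is the final bookkeeping step translating your normal form into the paper's, which the paper avoids by inducting directly on its own expressions. Since later arguments in the paper quote the lemma's formulas verbatim, that translation step is not optional and should be written out for all four coordinates, not just one, but it is routine as you say.
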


\begin{proof}

The proof of (a) is by induction on $n$. When $n=0$,
\[
\left(y^{-1}(x^{-1}y^{-1})^{-1}x(yx)^{-1}y, (y^{-1}x^{-1})^{0}y(xy)^{0}\right)
=\left(y^{-1}(yx)x(x^{-1}y^{-1})y, y\right)=(x, y)
\]
as required. Now assume the lemma holds when $n=k$, and let $(a,b) = \sigma^{2k}(x,y)$. By assumption, $b = (y^{-1}x^{-1})^{k}y(xy)^{k}$. Perform two Hurwitz moves on $(a,b)$ to obtain
\begin{equation*}
\sigma^{2(k+1)}(x,y) = (b^{-1}ab,b^{-1}a^{-1}bab).
\end{equation*}
Since Hurwitz moves on a factorization preserve its product, $ab = xy$ and $b^{-1}a^{-1} = y^{-1}x^{-1}$. By assumption $b = (y^{-1}x^{-1})^{k}y(xy)^{k}$. Thus
\begin{align*}
\sigma^{2(k+1)}(x,y) &= (b^{-1}xy,y^{-1}x^{-1}bxy)\\
&= \left((y^{-1}x^{-1})^{k}y^{-1}(xy)^{k}xy, y^{-1}x^{-1}(y^{-1}x^{-1})^{k}y(xy)^{k}xy\right) \\
&=\left(y^{-1}(x^{-1}y^{-1})^{(k+1)-1}x(yx)^{(k+1)-1}y,(y^{-1}x^{-1})^{k+1}y(xy)^{k+1} \right),
\end{align*}
and the induction is complete.\par
Now let $(a,b) = \sigma^{2n+1} (x,y)$. By part (a), $a = (y^{-1}x^{-1})^{n}y(xy)^{n}$. Since $ab = xy$, $b = (y^{-1}x^{-1})^{n}y^{-1}(xy)^{n}xy$. Thus 
\[
(a,b) = \left((y^{-1}x^{-1})^{n}y(xy)^{n},y^{-1}(x^{-1}y^{-1})^{n}x(yx)^{n}y\right).
\]

The proof for negative $n$ is similar.
\end{proof}

\begin{theorem}
\label{thm:xy-yx}
 Let $G$ be any group, and $x, y$ any elements of $G$.  The Hurwitz orbit of the factorization $(x, y)$ has the same size as the Hurwitz orbit of $(y, x)$.

\end{theorem}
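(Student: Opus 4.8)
My plan is to sidestep the explicit formulas of Lemma~\ref{lemma:xy-form} and instead exploit a symmetry of the Hurwitz action. The naive idea --- ``transposing the two entries is a bijection of $G^2$, so it should match up the orbits'' --- does not work, because swapping entries is not a Hurwitz move and does not commute with $\sigma$. The remedy is to combine a \emph{single} Hurwitz move with a \emph{global conjugation}.

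\textbf{Step 1 (global conjugation is Hurwitz-equivariant).} For $g \in G$ let $\phi_g \colon G^\ell \to G^\ell$ conjugate every entry, $\phi_g(x_1, \ldots, x_\ell) = (g^{-1}x_1 g, \ldots, g^{-1}x_\ell g)$. Working straight from the definition of $\sigma_i$, I would check that $\phi_g \circ \sigma_i = \sigma_i \circ \phi_g$ for every $i$: the factor $x_{i+1}^{-1} x_i x_{i+1}$ simply becomes $(g^{-1}x_{i+1}g)^{-1}(g^{-1}x_i g)(g^{-1}x_{i+1}g)$, so the conjugations pass through. Hence $\phi_g$ commutes with the whole braid-group action, and since it is a genuine bijection of $G^\ell$ (with inverse $\phi_{g^{-1}}$) it restricts to a bijection from the Hurwitz orbit of any tuple $T$ onto the Hurwitz orbit of $\phi_g(T)$. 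In particular those two orbits have the same size --- finite or infinite.

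\textbf{Step 2 (apply it with $\ell = 2$, $g = y$).} Compute $\phi_y(y,x) = (y^{-1}yy,\, y^{-1}xy) = (y,\, y^{-1}xy) = \sigma(x,y)$. So $\phi_y$ carries the Hurwitz orbit of $(y,x)$ bijectively onto the Hurwitz orbit of $\sigma(x,y)$. But $\sigma(x,y)$ lies in the Hurwitz orbit of $(x,y)$, so that orbit is exactly the orbit of $(x,y)$. Therefore the orbit of $(y,x)$ has the same size as the orbit of $(x,y)$, which is the claim.

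The only piece of actual content is the equivariance check in Step~1, which is routine; the one point requiring care is the placement of inverses --- using the convention $\phi_g(a) = g^{-1}ag$ so that $\phi_y(y,x)$ lands precisely on $\sigma(x,y)$ rather than on $\sigma^{\pm 1}$ of some conjugate. (As a consistency check, the identity underlying Step~2 unwinds to $\sigma^n(y,x) = \phi_{y^{-1}}\bigl(\sigma^{n+1}(x,y)\bigr)$ for all $n$, which can be verified directly against Lemma~\ref{lemma:xy-form}; this also explains why a head-on comparison of the formulas for $\sigma^n(x,y)$ and $\sigma^n(y,x)$ would succeed but be messier. I expect the global-conjugation argument to be the one that adapts cleanly to the longer factorizations treated in the later sections.)
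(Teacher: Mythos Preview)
Your argument is correct. The equivariance in Step~1 is exactly the content of the paper's Lemma~\ref{lem:conjugation}, and the computation $\phi_y(y,x)=(y,y^{-1}xy)=\sigma_1(x,y)$ in Step~2 is valid under the paper's convention for $\sigma_i$, so the two orbits are in bijection.

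This is, however, not the proof the paper gives for Theorem~\ref{thm:xy-yx}. There the authors work directly with the explicit formulas of Lemma~\ref{lemma:xy-form}, showing case by case that $\sigma^n(x,y)=(x,y)$ forces $\sigma^n(y,x)=(y,x)$ and conversely. Your route avoids that computation entirely and is shorter; in fact it is precisely the argument the paper uses later for Theorem~\ref{thm:cycling} (cycling in length~$\ell$), specialized to $\ell=2$. So your closing prediction is on the nose: Lemma~\ref{lem:conjugation} plus one Hurwitz move is exactly how the paper handles arbitrary length, and Theorem~\ref{thm:xy-yx} is the $\ell=2$ case of Theorem~\ref{thm:cycling}. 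What the paper's direct proof buys is the slightly sharper statement that the \emph{period} of $(x,y)$ under $\sigma$ equals that of $(y,x)$, not merely that the orbit sizes agree; your conjugation argument gives this too (since $\phi_y$ intertwines $\sigma$ with $\sigma$, not with some other power), but the paper extracts it explicitly from the formulas.
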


\begin{proof}
Let $n$ be a nonnegative integer, and suppose $\sigma^{2n}(x,y) = (x,y)$. By Lemma \ref{lemma:xy-form}, 
\[
(x,y) = \left(y^{-1}(x^{-1}y^{-1})^{n-1}x(yx)^{n-1}y, (y^{-1}x^{-1})^{n}y(xy)^{n} \right).
\]
Observe that the right-hand value of this factorization may be written as $y^{-1}(x^{-1}y^{-1})^{n-1}x^{-1}(yx)^ny$, so that 
\begin{equation*}
\label{eqn:right-left}
y = (x^{-1}y^{-1})^{n-1}x^{-1}(yx)^n=x^{-1}(y^{-1}x^{-1})^{n-1}y(xy)^{n-1}x. 
\end{equation*}
By Lemma \ref{lemma:xy-form},
\begin{align*}
\sigma^{2n}(y,x) &= \left( x^{-1}(y^{-1}x^{-1})^{n-1}y(xy)^{n-1}x, (x^{-1}y^{-1})^n x(yx)^n\right)\\
&= \left(y, (x^{-1}y^{-1})^n x(yx)^n \right).
\end{align*}
The product of this factorization is $yx$, so the right-hand value is $x$, and $\sigma^{2n}(y,x) = (y,x)$. 

If $\sigma^{2n+1}(x,y) = (x,y)$, then by Lemma \ref{lemma:xy-form} 
\begin{equation}\label{eqn:xy-odd-left} 
y = y^{-1}(x^{-1}y^{-1})^nx(yx)^{n}y = (x^{-1}y^{-1})^nx(yx)^n. \end{equation}
By Lemma \ref{lemma:xy-form} and (\ref{eqn:xy-odd-left})
\begin{align*}
\sigma^{2n+1}(y,x) &= \left((x^{-1}y^{-1})^nx(yx)^n, x^{-1}(y^{-1}x^{-1})^ny(xy)^{n}x\right) \\
&= \left(y, x^{-1}(y^{-1}x^{-1})^ny(xy)^{n}x\right)
\end{align*}
So $\sigma^{2n+1}(y,x) = (y,x)$ \par
We have shown that for any nonnegative integer $n$, $\sigma^{n}(x,y) = (x,y)$ implies $\sigma^n(y,x) = (y,x)$. Thus the size of the Hurwitz orbit of $(y,x)$ is less than or equal to the size of the Hurwitz orbit of $(x,y)$. By interchanging $x$ and $y$ in the argument, it can be seen that $\sigma^n(y,x) = (y,x)$ implies $\sigma^n(x,y) = (x,y)$. So $(x,y)$ and $(y,x)$ have Hurwitz orbits of the same size.
\end{proof}

We expand on Lemma \ref{lemma:xy-form} to prove $(x, y)$ has the same sized Hurwitz orbit as $(x^{-1}, y^{-1})$.

\begin{theorem}
Let $G$ be any group, and $x, y$ any elements of $G$.  The Hurwitz orbit of the factorization $(x, y)$ has the same size as the Hurwitz orbit of $(x^{-1}, y^{-1})$.
\end{theorem}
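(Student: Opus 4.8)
The plan is to mirror the proof of Theorem~\ref{thm:xy-yx}. For a length-two factorization $T$, the Hurwitz orbit is $\{\sigma^k(T) : k \in \mathbb{Z}\}$, so its size is the least positive integer $m$ with $\sigma^m(T) = T$ (and infinite if no such $m$ exists). Hence it suffices to show that, for every nonnegative integer $m$, $\sigma^m(x,y) = (x,y)$ holds if and only if $\sigma^m(x^{-1},y^{-1}) = (x^{-1},y^{-1})$; then the minimal periods --- and so the orbit sizes --- agree. I would handle the even and odd cases separately, using Lemma~\ref{lemma:xy-form} in each.

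Because Hurwitz moves preserve the product, $\sigma^m(x,y) = (x,y)$ is equivalent to a single coordinate of $\sigma^m(x,y)$ being correct. Reading off Lemma~\ref{lemma:xy-form} and simplifying with $(y^{-1}x^{-1})^n = (xy)^{-n}$ and $(x^{-1}y^{-1})^n = (yx)^{-n}$, the condition $\sigma^{2n}(x,y) = (x,y)$ reduces to ``$y$ commutes with $(xy)^n$'' and $\sigma^{2n+1}(x,y) = (x,y)$ reduces to $y(xy)^n = (xy)^n x$. Substituting $x^{-1}, y^{-1}$ into the same formulas, $\sigma^{2n}(x^{-1},y^{-1}) = (x^{-1},y^{-1})$ reduces to ``$y$ commutes with $(yx)^n$'' and $\sigma^{2n+1}(x^{-1},y^{-1}) = (x^{-1},y^{-1})$ reduces to $x(yx)^n = (yx)^n y$. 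It then remains to check that each pair of conditions is equivalent, for which I would use the elementary telescoping identities $(yx)^n = y(xy)^n y^{-1}$, $y(xy)^n = (yx)^n y$, and $(xy)^n x = x(yx)^n$, proved by writing out the alternating products. In the odd case the two conditions become the single equation $(yx)^n y = x(yx)^n$. In the even case, conjugating ``$y$ commutes with $(xy)^n$'' by $y$ and substituting $(yx)^n = y(xy)^n y^{-1}$ produces ``$y$ commutes with $(yx)^n$''.

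I expect the only real obstacle to be organizational: keeping the exponents and the inverted variables straight across the four sub-relations, and checking the degenerate case $n = 0$ (where part (a) of the lemma involves negative exponents) causes no trouble --- there both sides are trivially fixed. It is worth noting a slicker alternative that sidesteps the computation entirely: the map $\phi(x_1, x_2) = (x_2^{-1}, x_1^{-1})$ satisfies $\phi(\sigma(x_1,x_2)) = \sigma^{-1}(\phi(x_1,x_2))$, so $\phi$ restricts to a bijection from the orbit of $(x,y)$ onto the orbit of $(y^{-1}, x^{-1})$, and combining this with Theorem~\ref{thm:xy-yx} applied to $x^{-1}, y^{-1}$ yields the result; but since Lemma~\ref{lemma:xy-form} is already available, the direct argument is the natural one to give here.
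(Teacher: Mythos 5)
Your proposal is correct and follows essentially the same route as the paper: apply Lemma~\ref{lemma:xy-form} to both factorizations, reduce each fixed-point condition $\sigma^m(T)=T$ to a commutation identity, and check that the identities for $(x,y)$ and $(x^{-1},y^{-1})$ are equivalent (the paper likewise conjugates by $y$ in the even case and matches coordinates in the odd case). The slicker alternative you sketch at the end---composing the reversal-and-inversion bijection with Theorem~\ref{thm:xy-yx} applied to $x^{-1},y^{-1}$---also works, and is in effect the $\ell=2$ case of the paper's later Theorem~\ref{thm:flip}.
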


\begin{proof}

Let $n$ be a nonnegative integer, and assume $\sigma^{2n}(x,y) = (x,y)$. By Lemma \ref{lemma:xy-form}, $y = (y^{-1}x^{-1})^ny(xy)^n$, so $(xy)^ny =  y(xy)^n $. Conjugate on both sides by $y^{-1}$ to obtain
\begin{align*}
y(xy)^nyy^{-1} &= yy(xy)^ny^{-1} \\
(yx)^ny&= y(yx)^n.
\end{align*}
Multiplying this last equality on both sides by $y^{-1}$, we have that $y^{-1}$ and $(yx)^n$ commute. So $y^{-1} = (yx)^{n}y^{-1}(x^{-1}y^{-1})^{n}$, which is precisely the right-hand value of $\sigma^{2n}(x^{-1},y^{-1})$ by Lemma \ref{lemma:xy-form}. Thus $\sigma^{2n}(x^{-1},y^{-1}) = (x^{-1},y^{-1})$. \par
If $\sigma^{2n+1}(x,y) = (x,y)$, then Lemma \ref{lemma:xy-form} gives
\begin{equation*}
y = y^{-1}(x^{-1}y^{-1})^nx(yx)^ny,
\end{equation*}
so that $y^{-1} = y^{-1}(x^{-1}y^{-1})^{n}x^{-1}(yx)^{n}y=(x^{-1}y^{-1})^{n}x^{-1}(yx)^{n}$, or equivalently $x^{-1} = (yx)^ny^{-1}(x^{-1}y^{-1})^n$. But, by Lemma \ref{lemma:xy-form}, this implies the left-hand value of $\sigma^{2n+1}(x^{-1},y^{-1})$ is $x^{-1}$, and so necessarily the right-hand value must be $y^{-1}$.\par
Altogether we have shown that $\sigma^n(x,y)=(x,y)$ implies $\sigma^n(x^{-1},y^{-1}) = (x^{-1},y^{-1})$. With little modification the same argument can be used to show that the converse is true as well. Thus $(x,y)$ and $(x^{-1},y^{-1})$ have Hurwitz orbits of the same size.
\end{proof}

\section{Cycling}
\label{section:cycling}

In this section, we show that cycling elements in a factorization leaves Hurwitz orbit size unchanged.

\begin{lemma}
\label{lem:conjugation}
Let $G$ be any group containing elements $x_1, x_2, \ldots , x_\ell$ and $y$. Then $(x_{1},x_{2},\ldots,x_{\ell})$ and $(y^{-1}x_{1}y,y^{-1}x_{2}y,\ldots,y^{-1}x_{\ell}y)$ have Hurwitz orbits of the same size.
\end{lemma}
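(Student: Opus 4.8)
The plan is to exploit the fact that conjugation by a fixed group element is an automorphism of $G$, and that Hurwitz moves are equivariant with respect to automorphisms. Concretely, let $\gamma_y \colon G \to G$ be the automorphism $\gamma_y(g) = y^{-1}gy$, and let $\Phi_y$ denote the induced map on $\ell$-tuples, $\Phi_y(x_1,\ldots,x_\ell) = (y^{-1}x_1 y,\ldots,y^{-1}x_\ell y)$. I will first check that $\Phi_y$ commutes with every Hurwitz move and its inverse.

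For this it suffices to compare $\sigma_i(\Phi_y(T))$ and $\Phi_y(\sigma_i(T))$ in positions $i$ and $i+1$, since the other coordinates are left untouched by both operations. Applying $\sigma_i$ after $\Phi_y$ turns the pair $(y^{-1}x_i y,\, y^{-1}x_{i+1}y)$ into $(y^{-1}x_{i+1}y,\, (y^{-1}x_{i+1}y)^{-1}(y^{-1}x_i y)(y^{-1}x_{i+1}y))$, and the second entry collapses to $y^{-1}(x_{i+1}^{-1}x_i x_{i+1})y$. Applying $\Phi_y$ after $\sigma_i$ turns $(x_i,x_{i+1})$ into $(y^{-1}x_{i+1}y,\, y^{-1}(x_{i+1}^{-1}x_i x_{i+1})y)$. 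These agree, so $\sigma_i \circ \Phi_y = \Phi_y \circ \sigma_i$; applying $\Phi_y$ to the displayed formula for $\sigma_i^{-1}$ in Section \ref{sec:background} gives $\sigma_i^{-1}\circ\Phi_y = \Phi_y\circ\sigma_i^{-1}$ in the same way.

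Since the Hurwitz orbit of a tuple $T$ consists of the images $w(T)$ as $w$ ranges over all words in the $\sigma_i^{\pm 1}$, these commutation relations show that $\Phi_y$ carries the Hurwitz orbit of $(x_1,\ldots,x_\ell)$ into the Hurwitz orbit of $\Phi_y(x_1,\ldots,x_\ell)$, and conversely that $\Phi_{y^{-1}}$ carries the latter orbit into the former. As $\Phi_y$ and $\Phi_{y^{-1}}$ are mutually inverse bijections on $\ell$-tuples, their restrictions give a bijection between the two orbits, so the orbits have the same size.

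There is essentially no obstacle here: the only thing that needs verifying is the equivariance in the second paragraph, which is a one-line group computation. I note in passing that the argument uses nothing about $\gamma_y$ beyond its being an automorphism of $G$, so the same proof shows more generally that applying any automorphism of $G$ entrywise to a factorization preserves the size of its Hurwitz orbit.
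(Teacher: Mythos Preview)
Your proof is correct and follows essentially the same approach as the paper: both verify that the entrywise conjugation map commutes with each Hurwitz move, and then use this together with the inverse conjugation $\Phi_{y^{-1}}$ to obtain a bijection between the two orbits. Your remark that the argument works for any automorphism of $G$ is a nice bonus the paper does not state explicitly.
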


\begin{proof}

To prove that the orbits are of equal size, we will find a bijection between them. First we show that if 
\begin{equation*}
\sigma_{p_1} \ldots \sigma_{p_n}(x_{1},x_{2},\ldots,x_{\ell})=(z_{1},z_{2},\ldots,z_{\ell}),
\end{equation*}
then
\begin{equation*}
\sigma_{p_1} \ldots \sigma_{p_n}(y^{-1}x_{1}y,y^{-1}x_{2}y,\ldots,y^{-1}x_{\ell}y)=(y^{-1}z_{1}y,y^{-1}z_{2}y,\ldots,y^{-1}z_{\ell}y),
\end{equation*}
where each $p_i \in \{1, \ldots, \ell - 1\}$. We verify that this is true when one Hurwitz move is applied: if $1 \leq i \leq \ell - 1$, then 
\begin{align*}
(\ldots, x_i,x_{i+1}, \ldots) &\overset{\sigma_i}{\to} (\ldots, x_{i+1},x_{i+1}^{-1}x_i x_{i+1}, \ldots), \\
(\ldots,y^{-1}x_{i}y,y^{-1}x_{i+1}y, \ldots) &\overset{\sigma_i}{\to} (\ldots, y^{-1}x_{i+1}y,y^{-1}x_{i+1}^{-1}x_{i}x_{i+1}y, \ldots). 
\end{align*}
The result follows by induction.

Observe that this if/then relationship goes both ways: $(y^{-1}x_{1}y,y^{-1}x_{2}y,\ldots,y^{-1}x_{\ell}y)$ is the factorization obtained by conjugating every element in $(x_{1},x_{2},\ldots,x_{\ell})$ by $y$, and $(x_{1},x_{2},\ldots,x_{\ell})$ is the factorization obtained by conjugating every element in $(y^{-1}x_{1}y,y^{-1}x_{2}y,\ldots,y^{-1}x_{\ell}y)$ by $y^{-1}$ (if $y$ is in $G$, so is $y^{-1}$).

We may define a function mapping each factorization $(z_{1},z_{2},\ldots,z_{\ell})$ in the Hurwitz orbit of $(x_{1},x_{2},\ldots,x_{\ell})$ to $(y^{-1}z_{1}y,y^{-1}z_{2}y,\ldots,y^{-1}z_{\ell}y)$ in the orbit of $(y^{-1}x_{1}y,y^{-1}x_{2}y,\ldots,y^{-1}x_{\ell}y)$. This function is the desired bijection.
\end{proof}

\begin{theorem}
\label{thm:cycling}
Let $G$ be any group containing elements $x_1, x_2, \ldots , x_\ell$. Then $(x_{1},x_{2},\ldots,x_{\ell})$ and $(x_{2},\ldots,x_{\ell},x_{1})$ have Hurwitz orbits of the same size.
\end{theorem}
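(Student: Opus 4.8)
My plan is to combine one explicit sequence of Hurwitz moves with Lemma~\ref{lem:conjugation}, and then iterate. Note first that no sequence of Hurwitz moves can literally cyclically shift a factorization, since Hurwitz moves preserve the product while $x_1x_2\cdots x_\ell$ and $x_2\cdots x_\ell x_1$ differ in general. However, Hurwitz moves \emph{can} slide a single entry past all the others at the cost of conjugating those others; the idea is that if we slide the \emph{last} entry all the way to the front, then every other entry is conjugated by the \emph{same} element $x_\ell$, and this uniform conjugation can be removed using Lemma~\ref{lem:conjugation}.

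First I would verify, by a short induction on the number of moves, that performing the Hurwitz moves at positions $\ell-1, \ell-2, \ldots, 1$ in that order carries $(x_1,x_2,\ldots,x_\ell)$ to
\[
(x_\ell, x_\ell^{-1}x_1x_\ell, x_\ell^{-1}x_2x_\ell, \ldots, x_\ell^{-1}x_{\ell-1}x_\ell).
\]
Indeed, for each $j$ from $\ell-1$ down to $1$, after the moves at positions $\ell-1, \ell-2, \ldots, j$ the tuple equals $(x_1,\ldots,x_{j-1},x_\ell,x_\ell^{-1}x_jx_\ell,\ldots,x_\ell^{-1}x_{\ell-1}x_\ell)$, and the move at position $j-1$ slides $x_{j-1}$ past the $x_\ell$ occupying position $j$, extending the pattern; taking $j=1$ gives the displayed tuple. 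Since these two factorizations lie in the same Hurwitz orbit, they have Hurwitz orbits of the same size.

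Next I would apply Lemma~\ref{lem:conjugation} with $y=x_\ell^{-1}$, which conjugates each entry of the displayed tuple by $x_\ell$: this fixes the leading $x_\ell$ and sends each $x_\ell^{-1}x_ix_\ell$ back to $x_i$, yielding $(x_\ell,x_1,x_2,\ldots,x_{\ell-1})$. Hence $(x_1,x_2,\ldots,x_\ell)$ and $(x_\ell,x_1,\ldots,x_{\ell-1})$ have Hurwitz orbits of the same size; in other words, moving the last entry of any factorization to the front preserves Hurwitz orbit size. Applying this statement $\ell-1$ times in succession (which has the net effect of moving the original first entry to the back) shows that $(x_1,x_2,\ldots,x_\ell)$ and $(x_2,x_3,\ldots,x_\ell,x_1)$ have Hurwitz orbits of the same size, as desired.

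The one point requiring care is the bookkeeping in the first step — confirming that every non-moved entry accumulates exactly one conjugation by $x_\ell$ and nothing further — but this is routine. The real content is the choice to push the \emph{last} coordinate to the front rather than the first to the back, since this makes the accumulated conjugations uniform and hence removable by Lemma~\ref{lem:conjugation}.
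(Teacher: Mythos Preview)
Your proof is correct and uses essentially the same idea as the paper: slide one entry past all the others with the moves $\sigma_{\ell-1},\ldots,\sigma_1$, observe that this produces a uniform conjugation, and invoke Lemma~\ref{lem:conjugation}. The paper is slightly more direct---it applies these moves to $(x_2,\ldots,x_\ell,x_1)$ and obtains $(x_1,x_1^{-1}x_2x_1,\ldots,x_1^{-1}x_\ell x_1)$, which is already a conjugate of $(x_1,\ldots,x_\ell)$, so no iteration is needed (indeed, your own ``move the last entry to the front'' statement, applied once to $(x_2,\ldots,x_\ell,x_1)$, already yields the theorem).
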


\begin{proof}
Suppose we apply the Hurwitz moves $\sigma_{\ell - 1}$, $\sigma_{\ell - 2}$, \ldots, $\sigma_1$ in that order to the factorization $(x_{2},\ldots,x_{\ell},x_{1})$. The resulting factorization is $(x_{1},x_{1}^{-1}x_{2}x_{1},\ldots,x_{1}^{-1}x_{\ell}x_{1})$. This resulting factorization is identical to $(x_{1},x_{2},\ldots,x_{\ell})$, except every element has been conjugated by $x_{1}$; in particular $x_{1}=x_{1}^{-1}x_{1}x_{1}$. By Lemma~\ref{lem:conjugation}, our theorem is true.
\end{proof}

\section{Reverses}

In this section we show that $(x_{1},x_{2}, \ldots, x_{\ell})$ will always have the same sized Hurwitz orbit as $(x_{\ell}^{-1}, \ldots,x_{2}^{-1},x_{1}^{-1})$. We also expand on some ideas presented in Section \ref{section:cycling}.

\label{reverses}
\begin{theorem}
\label{thm:flip}
Let $G$ be any group containing elements $x_1, x_2, \ldots , x_\ell$. Then $(x_{1},x_{2}, \ldots, x_{\ell})$ and $(x_{\ell}^{-1}, \ldots,x_{2}^{-1},x_{1}^{-1})$ have Hurwitz orbits of the same size.
\end{theorem}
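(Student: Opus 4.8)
The plan is to construct an explicit bijection between the two Hurwitz orbits, in the spirit of Lemma~\ref{lem:conjugation}. Define a map $\phi$ on $\ell$-tuples of elements of $G$ by $\phi(z_1, z_2, \ldots, z_\ell) = (z_\ell^{-1}, \ldots, z_2^{-1}, z_1^{-1})$; that is, reverse the order of the entries and invert each one (the two operations commute). Then $\phi$ is an involution of $G^\ell$, hence a bijection, and $\phi(x_1, \ldots, x_\ell) = (x_\ell^{-1}, \ldots, x_1^{-1})$ is precisely the target factorization.

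The key step is to check that $\phi$ intertwines single Hurwitz moves, with the index reflected and the move inverted: if $1 \le i \le \ell - 1$ and $\sigma_i$ carries a factorization $S$ to $S'$, then $\sigma_{\ell - i}^{-1}$ carries $\phi(S)$ to $\phi(S')$. This is a direct computation. The move $\sigma_i$ replaces the consecutive pair at positions $i, i+1$, say $(x_i, x_{i+1})$, by $(x_{i+1}, x_{i+1}^{-1} x_i x_{i+1})$, and leaves all other entries alone. Under $\phi$ these entries land at positions $\ell-i, \ell-i+1$, forming the pair $(x_{i+1}^{-1}, x_i^{-1})$ before the move and the pair $\bigl((x_{i+1}^{-1}x_i x_{i+1})^{-1}, x_{i+1}^{-1}\bigr) = (x_{i+1}^{-1} x_i^{-1} x_{i+1}, x_{i+1}^{-1})$ afterwards; one checks this is exactly the effect of $\sigma_{\ell-i}^{-1}$, which sends a pair $(a,b)$ to $(aba^{-1}, a)$, taking $a = x_{i+1}^{-1}$ and $b = x_i^{-1}$. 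All remaining entries are merely relabeled by $\phi$ and untouched by both moves. Taking inverses, $\sigma_i^{-1}$ on $S$ corresponds to $\sigma_{\ell-i}$ on $\phi(S)$; note that $\ell - i$ always lies in $\{1, \ldots, \ell-1\}$, so these are legitimate Hurwitz moves.

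With the single-move correspondence in hand, a straightforward induction on the number of moves shows that whenever a sequence of Hurwitz moves carries $(x_1,\ldots,x_\ell)$ to some factorization $S$, the ``mirrored'' sequence (each $\sigma_i^{\pm 1}$ replaced by $\sigma_{\ell-i}^{\mp 1}$) carries $\phi(x_1,\ldots,x_\ell) = (x_\ell^{-1},\ldots,x_1^{-1})$ to $\phi(S)$. Hence $\phi$ maps the Hurwitz orbit of $(x_1,\ldots,x_\ell)$ into the Hurwitz orbit of $(x_\ell^{-1},\ldots,x_1^{-1})$. Applying the same statement with the roles of the two factorizations exchanged, and using $\phi^2 = \mathrm{id}$, shows $\phi$ also maps the second orbit into the first. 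Since $\phi$ is a bijection, it restricts to a bijection between the two orbits, which proves the theorem.

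The only real obstacle is the bookkeeping in the key step: one must verify that reversing positions turns $\sigma_i$ into $\sigma_{\ell-i}^{-1}$ rather than $\sigma_{\ell-i}$, and that the conjugating factor transforms correctly under inversion. Once that single-move dictionary is pinned down, the rest is formal, exactly as in the proof of Lemma~\ref{lem:conjugation}.
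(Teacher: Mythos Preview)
Your proof is correct and follows essentially the same approach as the paper: both define the map $\phi(z_1,\ldots,z_\ell)=(z_\ell^{-1},\ldots,z_1^{-1})$, verify on a single move that $\sigma_i$ corresponds to $\sigma_{\ell-i}^{-1}$ under $\phi$, extend by induction, and conclude that $\phi$ restricts to a bijection between the two orbits. Your write-up is slightly more explicit about $\phi$ being an involution and about the index bookkeeping, but the argument is the same.
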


\begin{proof}
To prove that the orbits are of equal size, we will find a bijection between them. First we show that if
\begin{equation*}
\sigma_{p_1} \ldots \sigma_{p_n}(x_{1},x_{2},\ldots,x_{\ell})=(z_{1},z_{2},\ldots,z_{\ell}),
\end{equation*}
then
\begin{equation*}
\label{eqn:conj2}
\sigma_{\ell-p_1}^{-1} \ldots \sigma_{\ell-p_n}^{-1}(x_{\ell}^{-1}, \ldots, x_2^{-1}, x_1^{-1})=(z_{\ell}^{-1}, \ldots, z_2^{-1}, z_1^{-1})
\end{equation*}
where each $p_i \in \{1, \ldots, \ell - 1\}$. We verify that this is true when one Hurwitz move is applied: if $1 \leq i \leq \ell - 1$, then
\begin{align*}
(\ldots, x_i,x_{i+1}, \ldots) &\overset{\sigma_i}{\to} (\ldots, x_{i+1},x_{i+1}^{-1}x_i x_{i+1}, \ldots), \\
(\ldots,x_{i+1}^{-1},x_{i}^{-1}, \ldots) &\overset{\sigma_{\ell-i}^{-1}}{\to} (\ldots, x_{i+1}^{-1}x_{i}^{-1}x_{i+1},x_{i+1}^{-1}, \ldots). 
\end{align*}
The result follows by induction. This if/then relationship goes both ways because $(x_k^{-1})^{-1}=x_k$.

We may define the following function: some factorization $(z_{1},z_{2},\ldots,z_{\ell})$ in the Hurwitz orbit of $(x_{1},x_{2},\ldots,x_{\ell})$ is mapped to the factorization $(z_{\ell}^{-1}, \ldots, z_2^{-1}, z_1^{-1})$, which is in the Hurwitz orbit of $(x_{\ell}^{-1}, \ldots, x_2^{-1}, x_1^{-1})$. This function is the desired bijection.
\end{proof}

One interesting case of Theorem \ref{thm:flip} is when every element in a factorization has order $1$ or $2$.

\begin{corollary}
\label{reverse}
Let $G$ by any group containing $x_1, x_2 \ldots, x_\ell$ such that $x_k$ has order $1$ or $2$ for $k = 1, \ldots, \ell$. Then $(x_{1},x_{2}, \ldots ,x_{\ell})$ and $(x_{\ell}, \ldots,x_{2},x_{1})$ have Hurwitz orbits of the same size.
\end{corollary}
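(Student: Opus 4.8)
The plan is to deduce this immediately from Theorem \ref{thm:flip}. The only input I need is the elementary observation that an element of order $1$ or $2$ is its own inverse.

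First I would note that since $x_k$ has order $1$ or $2$, we have $x_k^2 = 1$, hence $x_k^{-1} = x_k$, for each $k = 1, \ldots, \ell$. Consequently the two tuples $(x_\ell^{-1}, \ldots, x_2^{-1}, x_1^{-1})$ and $(x_\ell, \ldots, x_2, x_1)$ are literally equal as factorizations, so in particular they have Hurwitz orbits of the same size (indeed, the same Hurwitz orbit). Then I would invoke Theorem \ref{thm:flip}, which asserts that $(x_1, x_2, \ldots, x_\ell)$ and $(x_\ell^{-1}, \ldots, x_2^{-1}, x_1^{-1})$ have Hurwitz orbits of equal size. Chaining these two facts gives that $(x_1, x_2, \ldots, x_\ell)$ and $(x_\ell, \ldots, x_2, x_1)$ have Hurwitz orbits of the same size, as claimed.

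There is essentially no obstacle here: the corollary is a direct specialization of Theorem \ref{thm:flip} using $x_k^{-1} = x_k$, and the proof is a single line once that substitution is made. The only thing to be slightly careful about is making explicit that ``order $1$ or $2$'' is exactly the hypothesis needed to guarantee $x_k^{-1} = x_k$ simultaneously for every index $k$, so that the reversal and the inversion-reversal coincide termwise.
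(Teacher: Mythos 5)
Your proof is correct and is essentially identical to the paper's: both observe that $x_k$ of order $1$ or $2$ satisfies $x_k = x_k^{-1}$, so that $(x_\ell^{-1}, \ldots, x_1^{-1}) = (x_\ell, \ldots, x_1)$, and then apply Theorem \ref{thm:flip}. Nothing further is needed.
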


\begin{proof}
If $x_{k}$ has order $1$ or $2$, then $x_{k}=x_{k}^{-1}$. So the factorization $(x_{\ell}^{-1}, \ldots,x_{2}^{-1},x_{1}^{-1})$ is equal to $(x_{\ell}, \ldots,x_{2},x_{1})$. Therefore, by Theorem~\ref{thm:flip} this corollary is true.
\end{proof}

\begin{remark}
In Theorem ~\ref{thm:cycling} we prove that the factorizations $(x_{1}, x_2, \ldots, x_\ell)$ and $(x_{2},\ldots, x_\ell, x_1)$ will always have the same sized Hurwitz orbits. This theorem produces particularly interesting results when observing factorizations containing only two distinct elements.

Suppose we have a factorization $(x_{1}, x_2, \ldots, x_\ell)$ such that each $x_i$, $1 \leq i \leq \ell$, is equal to $y$ or $z$. One can show that if the factorization has length $\ell$, $\ell \leq 5$, then $(x_{1}, x_2, \ldots, x_\ell)$ has the same sized Hurwitz orbit as its reverse, $(x_{\ell},\ldots, x_2, x_1)$. This is because $(x_{\ell},\ldots, x_2, x_1)$ can be obtained by cycling $(x_{1}, x_2, \ldots, x_\ell)$.

The following is an example. Suppose we want to show that $(z,z,y,z,y)$ has the same sized Hurwitz orbit as $(y,z,y,z,z)$. Using Theorem ~\ref{thm:cycling}, we can show that $(z,z,y,z,y)$ has the same sized Hurwitz orbit as $(z,y,z,y,z)$ which has the same sized Hurwitz orbit as $(y,z,y,z,z)$.

However, this property does not neccesarily hold when there are more than two distinct elements in the factorization, or when the length of the factorization is greater than 5. For example, suppose you take the factorization $(x,y,z)$ and you want to prove that its Hurwitz orbit has the same size as its reverse, $(z,y,x)$. Cycling, we get $(x,y,z) \approx (y,z,x) \approx (z,x,y) \approx (x,y,z)$. We have cycled completely through $(x,y,z)$ without producing $(z,y,x)$. Now let's take a look at a case where the factorization has length greater than 5. Let's take the factorization $(x,x,y,x,y,y)$ and show that we cannot produce its reverse $(y,y,x,y,x,x)$ through cycling:
\begin{multline*}
(x,x,y,x,y,y) \approx (x,y,x,y,y,x) \approx (y,x,y,y,x,x) \approx 
\\ 
(x,y,y,x,x,,y) \approx (y,y,x,x,y,x) \approx (y,x,x,y,x,y) \approx (x,x,y,x,y,y).
\end{multline*}
We have shown a case of a length 6 factorization where cycling does not produce its reverse. Therefore, our Remark proves that a factorization necessarily, by Theorem ~\ref{thm:cycling}, has the same sized Hurwitz orbit as it's reverse if and only if the length $\ell$ of the factorization is less than or equal to 5, and there are only 2 distinct elements in the factorization. 
\end{remark}

\section{Double reverses and reverse relations}
\label{Section:ReverseRelation}

In this section, we investigate groups with presentations in which relations between the generators are reversible. We prove that in such groups, Hurwitz orbit size is preserved when reversing factorizations of elements from the generating set. This result follows as a corollary of the main theorem in this section, Theorem \ref{dbr-hsize}, which proves that a modified reversal operation on factorizations consisting of arbitrary words preserves Hurwitz orbit size.

We begin by recalling a number of standard definitions from the literature -- see, for example, \cite{Peifer}. Let $X = \{x_1, x_2, \ldots, x_n\}$ be a set of distinct elements, and $X^{-1} = \{x_1^{-1}, x_2^{-1}, \ldots, x_n^{-1}\}$ a set of elements distinct from each other and from the elements of $X$. A \emph{word} on $X \cup X^{-1}$ is a string of finitely many elements, or \emph{letters}, from $X \cup X^{-1}$. An \emph{inverse pair} is a word of the form $x_ix_i^{-1}$ or $x_i^{-1}x_i$, and a word is said to be \emph{reduced} if it contains no inverse pairs. The set $F(X)$ of all reduced words on $X \cup X^{-1}$ is a group under the operation of concatenation followed by deletion of inverse pairs; it is called the \emph{free group on $n$ generators}. The inverse of an element $x_{i_1} x_{i_2} \ldots x_{i_p} \in F(X)$ is $x_{i_p}^{-1}x_{i_{p-1}}^{-1} \ldots x_{i_1}^{-1}$.

Now let $G$ be a group and $S$ a subset of $G$. The \emph{normal closure} of $S$ in $G$ is defined as the intersection of all normal subgroups of $G$ containing $S$. Every element of the normal closure of $S$ in $G$ is the product of conjugates of elements of $S$.

A \emph{presentation} $\langle X \mid R \rangle$ of a finitely generated group $G$ consists of a generating set $X = \{x_1, x_2, \ldots, x_n\}$ and a set of \emph{relations} $R \subset F(X)$, such that the quotient of $F(X)$ by the normal closure of $R$ in $F(X)$ is isomorphic to $G$. We denote the normal closure of $R$ in $F(X)$ by $N$.

The set $X$ is identified with a generating set for $G$, and two elements $x_{i_1}x_{i_2} \ldots x_{i_p}$ and $x_{j_1} x_{j_2} \ldots x_{j_q}$ are equal in $G$ if and only if they belong to the same coset of $N$ in $F(X)$. To distinguish between the two groups, it will be useful to write $a = b$ for equality in $F(X)$ and $a \equiv b$ for equality in $G$. 

If $a \in F(X)$, the \emph{reverse} $a^*$ of $a$ is obtained by reversing the order of the letters in $a$. If $a = x_{i_1} x_{i_2} \ldots x_{i_p}$, then $a^* = x_{i_p}x_{i_{p-1}} \ldots x_{i_1}$. If some inverse pair $x_ix_i^{-1}$ appears in $a^*$, then $x_i^{-1}x_i$ appears in $a$, a contradiction because $a$ is reduced. So the reverse of a reduced word is itself a reduced word. With this notation we have 
\begin{equation}
\label{eqn:rev-inv}
\left(a^{-1}\right)^* = x_{i_1}^{-1} x_{i_2}^{-1} \ldots x_{i_p}^{-1} = \left(a^*\right)^{-1}. 
\end{equation}

We begin with a proposition on how to calculate reverses of products in $F(X)$.

\begin{prop}
\label{prop:fn-rev-mult} 
Let $a, a_1, a_2, \ldots, a_m$ be elements of $F(X)$ such that $a = a_1 a_2 \ldots a_m$. Then $a^* = a_m^* a_{m-1}^* \ldots a_1^*$. That is, multiplying words in $F(X)$ and reversing their product is equivalent to reversing each word, reversing the order of multiplication, and taking the product. 
\end{prop}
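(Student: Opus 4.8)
The plan is to reduce the statement to the case $m = 2$ --- that is, to proving $(bc)^* = c^*b^*$ for all $b, c \in F(X)$ --- and then to control how reversal interacts with the deletion of inverse pairs that occurs when multiplying reduced words.

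First I would dispose of the reduction. Once $(bc)^* = c^*b^*$ is known, the general identity follows by induction on $m$: writing $a = a_1(a_2 a_3 \ldots a_m)$ gives $a^* = (a_2 a_3 \ldots a_m)^* a_1^*$ by the two-factor case, and the inductive hypothesis rewrites $(a_2 a_3 \ldots a_m)^*$ as $a_m^* a_{m-1}^* \ldots a_2^*$. The base cases $m = 1$ and $m = 0$ (the empty product) are immediate.

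For the case $m = 2$, the point requiring care is that the reduced word representing $bc$ need not be the concatenation of the reduced words of $b$ and $c$, because inverse pairs may have to be deleted. To handle this, I would introduce the operation $\rho$ that reverses the letter string of an \emph{arbitrary} word, reduced or not; on reduced words $\rho$ coincides with the reverse $a \mapsto a^*$. Two elementary facts then drive the proof. First, $\rho(uv) = \rho(v)\rho(u)$ for the concatenation of any two words, directly from the definition. Second, if $w'$ is obtained from $w$ by deleting one inverse pair, then $\rho(w')$ is obtained from $\rho(w)$ by deleting the mirror-image inverse pair --- which is again an inverse pair, since $\rho(x_ix_i^{-1}) = x_i^{-1}x_i$. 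Because every element of $F(X)$ has a unique reduced form, the second fact implies that $\rho$ carries the reduced form of any word to the reduced form of its reversal. Now, letting $W_b$ and $W_c$ be the reduced words of $b$ and $c$, the element $bc$ is by definition the reduced form of the concatenation $W_bW_c$; applying $\rho$ together with the two facts, $(bc)^*$ is the reduced form of $\rho(W_bW_c) = \rho(W_c)\rho(W_b) = W_c^*W_b^*$, and the reduced form of $W_c^*W_b^*$ is exactly the product $c^*b^*$ in $F(X)$. This settles the case $m = 2$, and with it the proposition.

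The only genuine obstacle is in phrasing the second fact carefully and in invoking confluence of free reduction so that ``the reduced form'' is well defined --- but both are standard properties of $F(X)$, already presupposed in the excerpt. An alternative route that avoids $\rho$ is to use the standard normal form $b = uv^{-1}$, $c = vw$ in which each of $uv^{-1}$, $vw$, and $uw$ is reduced, so that $bc = uw$; the identity is transparent when no cancellation occurs, the words $u$, $v$, $w$, and $v^{-1}$ are each reduced because they are prefixes or suffixes of reduced words, and then $c^*b^* = w^*v^*(v^*)^{-1}u^* = w^*u^* = (bc)^*$ using (\ref{eqn:rev-inv}). I would present whichever of the two versions reads more cleanly in context.
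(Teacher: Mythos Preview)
Your proposal is correct and follows essentially the same route as the paper: reduce to $m=2$ by induction, then handle the possible cancellation at the junction between the two reduced words. The paper carries this out concretely by peeling off one inverse pair at a time (your iterative deletion argument, just written with explicit letter indices), while your normal-form alternative $b=uv^{-1}$, $c=vw$ packages the same cancellation in one step; either presentation is fine.
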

\begin{proof}
It suffices to verify the statement when $a$ is a product of two elements $a_1$ and $a_2$ of $F(X)$. The proposition then follows by induction. 

Suppose $a = x_{i_1}x_{i_2} \ldots x_{i_p}$, $a_1 = x_{j_1} x_{j_2} \ldots x_{j_q}$, and $a_2 = x_{k_1} x_{k_2} \ldots x_{k_r}$. We want to show $x_{i_p}x_{i_{p-1}} \ldots x_{i_1} = (x_{k_r} x_{k_{r-1}} \ldots x_{k_1})(x_{j_q} x_{j_{q-1}} \ldots x_{j_1})$. Since $a_1$ and $a_2$ are reduced, either $x_{k_r}x_{k_{r-1}} \ldots x_{k_1} x_{j_q}x_{j_{q-1}} \ldots x_{j_1}$ is reduced or $x_{k_1}x_{j_q}$ is an inverse pair. In the first case it follows that $x_{j_1} x_{j_2} \ldots x_{j_q} x_{k_1} x_{k_2} \ldots x_{k_r}$ is reduced and is thus equal to $a$. So $a^* = x_{k_r}x_{k_{r-1}} \ldots x_{k_1} x_{j_q}x_{j_{q-1}}x_{j_1} = a_2^*a_1^*$, as required.

If $x_{k_1}x_{j_q}$ is an inverse pair, then so is its reverse. Deleting these inverse pairs gives
\begin{align*} a &= (x_{j_1} x_{j_2} \ldots x_{j_{q-1}})(x_{k_2} x_{k_3} \ldots x_{k_r}), \\
a_2^*a_1^* &= (x_{k_r}x_{k_{r-1}} \ldots x_{k_2}) (x_{j_{q-1}}x_{j_{q-2}} \ldots x_{j_1}).
\end{align*}
This process continues until there are no more inverse pairs to be deleted.
\end{proof}

The groups of interest in this section can now be defined. A presentation $\langle X \mid R \rangle$ for a group $G$ is said to have \emph{reversible relations} if for every $x_{i_1} x_{i_2} \ldots x_{i_p} \in R$ the reverse word $x_{i_p} x_{i_{p-1}} \ldots x_{i_1}$ is contained in $N$.

For example, two commonly used presentations of the dihedral group of order $2n$ have reversible relations. Let $D_{2n} = \langle r,s \mid r^n = s^2 = 1, rs = sr^{-1} \rangle$. Formally we can say $D_{2n} = \langle r,s \mid R \rangle$ where $R = \{r^n, s^2, rsrs^{-1}\}$. Then $(r^n)^* = r^n \in R$, $(s^2)^* = s^2 \in R$, and $(rsrs^{-1})^* = s^{-1}rsr = s^{-1}(rsrs^{-1})s \in N$, so this presentation has reversible relations.

Now consider the presentation $\langle a,b \mid a^2 = b^2 = (ab)^n = 1 \rangle$ for $D_{2n}$. Since the reverse of $(ab)^n$ is $(ba)^n = b(ab)^nb^{-1} \in N$, this presentation again has reversible relations. An important part of our definition of reversible relations is that reverse words are not required to be in the set of relations $R$ itself, but rather in the normal closure $N$ of $R$.

The presentation $\langle a,b \mid a^4 = 1, a^2 = b^2, ba = a^{-1}b \rangle$ for the quarternion group $Q_8$ is another example of a presentation with reversible relations. However, $Q_8$ can also be given by $\langle -1, i, j, k \mid (-1)^2 = 1, i^2 = j^2 = k^2 = ijk = -1 \rangle$, which does not have the same property: while $ijk(-1) = 1$, the reverse word $(-1)kji$ is equal to $(-1)k(-1)k = k^2 = -1 \neq 1$.

\begin{remark}
\label{remark:presentations}
Suppose $\langle X \mid R \rangle$ is a group presentation, and we want to show that it has reversible relations. Our definition would have us reverse each word in $R$ and check that these reverses belong to $N$. This process can be made easier by the following observations.
Relations giving the order of a generator, such as $r^n = 1$, are always reversible. Equations of the form $a = a^*$, such as $xyxy = yxyx$, are reversible. Equations $a = b$ where $a$ and $b$ are \emph{palindromes}, which is to say $a = a^*$ and $b = b^*$ in $F(X)$, are reversible. 
\end{remark} 

With an arbitrary presentation, the reverses of two equal words need not be equal. For example, in $Q_8$ we have $ij = k$ but $ji = -k \neq k$. The next proposition shows that this never happens when reversing words on a generating set with reversible relations. 

\begin{prop}
\label{rev-eq}
Suppose that the group $G$ is given by a presentation $\langle X \mid R \rangle$ with reversible relations. If $a,b \in F(X)$ and $a \equiv b$ in $G$, then $a^* \equiv b^*$.
\end{prop}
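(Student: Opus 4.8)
The plan is to pass to the free group $F(X)$ and reduce the statement to the single claim that the normal subgroup $N$ is stable under the reversal operation $w \mapsto w^*$; once that is established, the rest is bookkeeping with Proposition~\ref{prop:fn-rev-mult} and equation~(\ref{eqn:rev-inv}).

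First I would translate both the hypothesis and the conclusion into membership statements in $F(X)$. Since $G = F(X)/N$, the hypothesis $a \equiv b$ is equivalent to $b^{-1}a \in N$, and the conclusion $a^* \equiv b^*$ is equivalent to $a^*(b^*)^{-1} \in N$. By~(\ref{eqn:rev-inv}) we have $(b^*)^{-1} = (b^{-1})^*$, and then Proposition~\ref{prop:fn-rev-mult} gives
\[
a^*(b^*)^{-1} = a^*\,(b^{-1})^* = (b^{-1}a)^*.
\]
So, writing $c = b^{-1}a$, it suffices to prove that $c \in N$ implies $c^* \in N$.

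Next I would prove that $N$ is closed under reversal. Every element $c$ of the normal closure of $R$ in $F(X)$ can be written as a product $c = w_1 w_2 \cdots w_m$ in which each $w_j$ is either a conjugate $g_j r_j g_j^{-1}$ or the inverse of such a conjugate, with $g_j \in F(X)$ and $r_j \in R$. By Proposition~\ref{prop:fn-rev-mult}, $c^* = w_m^* w_{m-1}^* \cdots w_1^*$, so it is enough to check that each $w_j^* \in N$. Applying Proposition~\ref{prop:fn-rev-mult} and~(\ref{eqn:rev-inv}) to a single conjugate gives
\[
\left(g_j\, r_j^{\pm 1}\, g_j^{-1}\right)^* = \left(g_j^{-1}\right)^* \left(r_j^{\pm 1}\right)^* g_j^* = \left(g_j^*\right)^{-1} \left(r_j^*\right)^{\pm 1} g_j^*,
\]
which is a conjugate of $(r_j^*)^{\pm 1}$. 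Because the presentation has reversible relations, $r_j^* \in N$; and since $N$ is a normal subgroup of $F(X)$, it is closed under inversion and conjugation, so $w_j^* \in N$. Hence $c^*$ is a product of elements of $N$, and therefore $c^* \in N$. Combining this with the reduction above, $a^*(b^*)^{-1} = (b^{-1}a)^* \in N$, that is, $a^* \equiv b^*$.

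I do not anticipate a serious obstacle. The only points that need care are that the map $w \mapsto w^*$ behaves like an anti-automorphism of $F(X)$ — so the order reversal in Proposition~\ref{prop:fn-rev-mult} must be tracked faithfully, and the conjugates-of-inverses $g_j r_j^{-1} g_j^{-1}$ that can occur in a general element of a normal closure must be handled — and that the definition of reversible relations is used in the form $r^* \in N$, not the stronger $r^* \in R$. The latter is exactly what makes the argument work: $N$, unlike $R$, is stable under the conjugation and inversion steps above.
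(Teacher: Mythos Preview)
Your proof is correct and follows essentially the same route as the paper: reduce to showing that $N$ is closed under reversal, then write an element of $N$ as a product of conjugates of relators, reverse factor by factor via Proposition~\ref{prop:fn-rev-mult} and~(\ref{eqn:rev-inv}), and use the reversible-relations hypothesis to land back in $N$. If anything, you are slightly more careful than the paper in explicitly allowing factors of the form $g_j r_j^{-1} g_j^{-1}$ in the description of a general element of the normal closure.
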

\begin{proof}
The elements $a$ and $b$ belong to the same coset of $N$ in $F(X)$, so $a = bn$ for some $n \in N$. Proposition \ref{prop:fn-rev-mult} gives $a^* = n^* b^*$, so $a^*$ and $b^*$ belong to the same coset of $N$ if $n^* \in N$. That is, $a^* \equiv b^*$ if $N$ is closed under the operation of reversing words.

Every element $n \in N$ is the product of conjugates of elements of $R$. If 
\[ n = b_1a_1b_1^{-1}b_2a_2b_2^{-1} \cdots b_ma_mb_m^{-1} \]
with $a_i \in R$, $b_i \in F(X)$, $i = 1, 2, \ldots, m$, then by Proposition \ref{prop:fn-rev-mult}
\[ n^* = \left(b_m^{-1}\right)^* a_m^* b_m^* \left(b_{m-1}^{-1}\right)^* a_{m-1}^* b_{m-1}^*  \cdots \left(b_1^{-1}\right)^* a_1^* b_1^*.\]
The presentation $\langle X \mid R \rangle$ has reversible relations, so $a_i^* \in N$ for all $i$. Since $(b_i^{-1})^* = (b_i^*)^{-1}$, $n^*$ is the product of conjugates of elements of $N$, and thus $n^* \in N$.
\end{proof} 

The main theorem in this section concerns an operation on factorizations which does more than just reverse the order of the factors: if $\langle X \mid R \rangle$ is a presentation for the group $G$, and if $U$ and $V$ are length-$\ell$ factorizations of elements in $G$, we say $U$ is a \emph{double reverse} of $V$ if there exist $a_1, a_2, \ldots, a_\ell \in F(X)$ such that 
\begin{align*}
U &= \left( a_1, a_2, \ldots, a_\ell \right), \\
V &= \left( a_\ell^*, a_{\ell-1}^*, \ldots, a_1^* \right).
\end{align*}
That is, $U$ is a double reverse of $V$ if $V$ can be obtained by reversing the order in which words appear in $U$ and reversing each word. This relation is symmetric, so we say $U$ and $V$ are double reverses.

\begin{remark}
Double reverses are not unique. For example, with the presentation for $Q_8$ given above we have that $T = (i, j, ij)$ and $U = (ji, j, i)$ are double reverses. Also $V = (k, j, i)$ is a double reverse of $T$ because $ij = k$. But $ji = -k \neq k$, so $U \neq V$.
\end{remark}

The next lemma lets us preserve the double reverse relation after performing a series of Hurwitz moves on one factorization. We will use it to build a map between the Hurwitz orbits of double reverse factorizations.

\begin{lemma}
\label{dbr-moves}
Let $G = \langle X \mid R \rangle$, let $U$ and $V$ be length-$\ell$ factorizations of elements in $G$, and assume $U$ and $V$ are double reverses. Perform $n$ Hurwitz moves $\sigma_{p_1}, \sigma_{p_2}, \ldots, \sigma_{p_n}$ on $U$ in order as listed, with $p_i \in \{1, \ldots, \ell-1\}$ for $i = 1,\ldots,n$. Call the factorization produced by these moves $U_n$. Let $V_n$ be the factorization obtained by performing inverse Hurwitz moves $\sigma^{-1}_{\ell - p_1}, \sigma^{-1}_{\ell - p_2}, \ldots, \sigma^{-1}_{\ell - p_n}$ on $V$. Then $U_n$ and $V_n$ are double reverses.
\end{lemma}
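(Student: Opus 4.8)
The plan is to prove the lemma by induction on $n$, the number of Hurwitz moves applied. The base case $n = 0$ is immediate: $U_0 = U$ and $V_0 = V$ are double reverses by hypothesis. For the inductive step, I assume $U_n$ and $V_n$ are double reverses, so there exist words $b_1, b_2, \ldots, b_\ell \in F(X)$ with $U_n = (b_1, \ldots, b_\ell)$ and $V_n = (b_\ell^*, \ldots, b_1^*)$. I then apply one more Hurwitz move $\sigma_{p_{n+1}}$ to $U_n$ and the corresponding inverse move $\sigma^{-1}_{\ell - p_{n+1}}$ to $V_n$, and check that the results are again double reverses by exhibiting an explicit choice of $F(X)$-representatives.

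The key computation is local, involving only positions $p := p_{n+1}$ and $p+1$ in $U_n$, and correspondingly positions $\ell - p$ and $\ell - p + 1$ in $V_n$. Writing $U_n = (\ldots, b_p, b_{p+1}, \ldots)$, the move $\sigma_p$ produces $(\ldots, b_{p+1}, b_{p+1}^{-1} b_p b_{p+1}, \ldots)$ — but here is the subtlety: $b_{p+1}^{-1} b_p b_{p+1}$ must be interpreted as an element of $F(X)$, namely the reduced word obtained from that product, since the Hurwitz move on $U_n$ as a factorization of elements of $G$ only determines things up to $\equiv$. Meanwhile $V_n = (\ldots, b_{p+1}^*, b_p^*, \ldots)$ in positions $\ell - p, \ell - p + 1$, and $\sigma^{-1}_{\ell - p}$ sends this to $(\ldots, b_{p+1}^* b_p^* (b_{p+1}^*)^{-1}, b_{p+1}^*, \ldots)$. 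The candidate $F(X)$-representatives for the new pair of factorizations are, in $U_{n+1}$, the words $b_{p+1}$ and $b_{p+1}^{-1} b_p b_{p+1}$ (reduced in $F(X)$), and I must verify that their reverses, in reversed order, agree in $G$ with the corresponding entries of $V_{n+1}$. Using Proposition \ref{prop:fn-rev-mult} together with \eqref{eqn:rev-inv}, the reverse of $b_{p+1}^{-1} b_p b_{p+1}$ in $F(X)$ equals $b_{p+1}^* b_p^* (b_{p+1}^*)^{-1}$, which is exactly the modified entry of $V_{n+1}$; and the reverse of $b_{p+1}$ is $b_{p+1}^*$, the other entry. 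All entries outside positions $p, p+1$ (resp. $\ell-p, \ell-p+1$) are unchanged on both sides, so the double reverse relation is preserved with the explicit representatives $(\ldots, b_{p+1}, b_{p+1}^{-1} b_p b_{p+1}, \ldots)$ for $U_{n+1}$.

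The one point requiring care — and the main obstacle — is that the Hurwitz move on $U_n$ is defined on the factorization of elements of $G$, so a priori the new third entry is only specified up to $\equiv$, whereas the double reverse relation is phrased in terms of equality in $F(X)$ of chosen representatives. The resolution is that we are free to choose the $F(X)$-representative: we simply declare the representative of the new entry to be the reduced word $b_{p+1}^{-1} b_p b_{p+1} \in F(X)$, which does represent the correct element of $G$. Then everything is an identity in $F(X)$, and Proposition \ref{prop:fn-rev-mult} (which is a statement about $F(X)$) applies verbatim. It is worth noting explicitly in the writeup that this is where reversible relations are \emph{not} yet needed — Lemma \ref{dbr-moves} is purely formal — and that Proposition \ref{rev-eq} will only enter later when we pass from "$U_n$ and $V_n$ are double reverses" to a statement purely about the Hurwitz orbits in $G$. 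With the index bookkeeping $\ell - p_i \in \{1, \ldots, \ell-1\}$ (valid since $p_i \in \{1, \ldots, \ell-1\}$) checked, the induction closes.
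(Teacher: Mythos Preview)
Your proof is correct and follows essentially the same approach as the paper's own proof: induction on $n$, with the inductive step reducing to the local computation $(b_{p+1}^{-1}b_p b_{p+1})^* = b_{p+1}^* b_p^* (b_{p+1}^*)^{-1}$ via Proposition~\ref{prop:fn-rev-mult} and \eqref{eqn:rev-inv}. Your explicit remark that reversible relations are not used here (only later, via Proposition~\ref{rev-eq}) is a clarification the paper leaves implicit, and is worth keeping.
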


\begin{proof}
Induct on $n$; the base case $n=0$ follows from having defined $U$ and $V$ as double reverses. Let $k$ be a nonnegative integer such that the lemma holds when $n=k$, and choose integers $p_1, \ldots, p_{k+1}$ between $1$ and $\ell-1$. Let 
\begin{align*}
U_k &= \sigma_{p_k} \sigma_{p_{k-1}} \cdots \sigma_{p_1}\left(U\right),\\
V_k &= \sigma^{-1}_{\ell-p_k} \sigma^{-1}_{\ell-p_{k-1}} \cdots \sigma^{-1}_{\ell-p_1}\left(V\right).
\end{align*}
By assumption $U_k$ and $V_k$ are double reverses, so there exist $a_1, a_2, \ldots, a_\ell \in F(X)$ such that
\begin{align*}
U_k &= \left( a_1, a_2, \ldots, a_\ell \right), \\
V_k &= \left( a_\ell^*, a_{\ell-1}^*, \ldots, a_1^* \right).
\end{align*} \par
Now let $p = p_{k+1}$, $U_{k+1} = \sigma_{p}(U_k)$, and $V_{k+1} = \sigma^{-1}_{\ell-p}(V_k)$. The Hurwitz move $\sigma_p$ only modifies elements at positions $p$ and $p+1$ in $U_k$, and $\sigma^{-1}_{\ell-p}$ only modifies elements at positions $\ell-p$ and $\ell-p+1$ in $V_k$:
\begin{align*}
U_{k+1} &= ( a_1, \ldots, a_{p-1}, \underbrace{a_{p+1}}_{p}, \underbrace{a_{p+1}^{-1}a_{p}a_{p+1}}_{p+1}, a_{p+2}, \ldots, a_\ell ),\\
V_{k+1} &= (a_\ell^*, \ldots, a_{p+2}^*, \underbrace{a_{p+1}^*a_{p}^* \left(a_{p+1}^*\right)^{-1}}_{\ell-p}, \underbrace{a_{p+1}^*}_{\ell-p+1}, a_{p-1}^*, \ldots a_1^* ).
\end{align*}
Reversing the order of elements in $U_{k+1}$ and changing each $a_i$ to $a_i^*$ produces a factorization identical to $V_{k+1}$ except at position $\ell - p$, so the two factorizations are double reverses if factor $\ell - p$ in $V_{k+1}$ is the reverse of factor $p+1$ in $U_{k+1}$. But \[(a_{p+1}^{-1}a_{p}a_{p+1})^* = a_{p+1}^*a_{p}^* \left(a_{p+1}^{-1}\right)^* = a_{p+1}^*a_{p}^* \left(a_{p+1}^*\right)^{-1}\] by \eqref{eqn:rev-inv}  and Proposition \ref{prop:fn-rev-mult}, so $U_{k+1}$ and $V_{k+1}$ are indeed double reverses.
\end{proof}

\begin{theorem}
\label{dbr-hsize}
Let $G$ be a group given by a presentation $\langle X \mid R \rangle$ with reversible relations. Let $U$ be a length-$\ell$ factorization of elements in $G$, and let $V$ be a double reverse of $U$. Then $U$ and $V$ have Hurwitz orbits of the same size.
\end{theorem}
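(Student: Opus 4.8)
The plan is to build an explicit bijection between the Hurwitz orbit of $U$ and the Hurwitz orbit of $V$. The key preliminary observation is that, under the hypothesis of reversible relations, \emph{double reverses are unique}: if $W = (w_1, \ldots, w_\ell)$ is any length-$\ell$ factorization of elements of $G$, and $a_i, b_i \in F(X)$ are two choices of words with $a_i \equiv w_i \equiv b_i$, then Proposition \ref{rev-eq} gives $a_i^* \equiv b_i^*$, so $(a_\ell^*, \ldots, a_1^*)$ and $(b_\ell^*, \ldots, b_1^*)$ are the same factorization of elements of $G$. Hence each $W$ has a well-defined double reverse, which I will write $W^{\mathrm{dr}}$; note that the double reverse relation is symmetric, and by uniqueness it is an involution, $(W^{\mathrm{dr}})^{\mathrm{dr}} = W$.

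Next I would define $\varphi \colon \mathrm{Orb}(U) \to \mathrm{Orb}(V)$ by $\varphi(W) = W^{\mathrm{dr}}$, and the substance of the argument is checking that this actually maps into $\mathrm{Orb}(V)$. Given $W \in \mathrm{Orb}(U)$, write $W = \sigma_{p_1}^{\epsilon_1} \cdots \sigma_{p_n}^{\epsilon_n}(U)$ with each $\epsilon_i = \pm 1$. Lemma \ref{dbr-moves} says that applying $\sigma_{p}$ to a factorization while applying $\sigma_{\ell-p}^{-1}$ to a double reverse of it preserves the double-reverse relation; applying the lemma to $W_2$ and $W_2^{\mathrm{dr}}$, where $W_2 = \sigma_p^{-1}(W_1)$, together with uniqueness of the double reverse of $W_1$, shows that applying $\sigma_p^{-1}$ to $W_1$ and $\sigma_{\ell-p}$ to its double reverse also preserves the relation. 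Iterating, $\sigma_{\ell-p_1}^{-\epsilon_1} \cdots \sigma_{\ell-p_n}^{-\epsilon_n}(V)$ is a double reverse of $W$, hence equals $W^{\mathrm{dr}}$ by uniqueness, and it visibly lies in $\mathrm{Orb}(V)$. So $\varphi$ is well defined.

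Because the hypotheses are symmetric — $V$ is a double reverse of $U$, so $U$ is a double reverse of $V$ — the identical construction yields $\psi \colon \mathrm{Orb}(V) \to \mathrm{Orb}(U)$, $\psi(W') = (W')^{\mathrm{dr}}$. Since $W^{\mathrm{dr}}$ is characterized uniquely and the double reverse is involutive, $\psi \circ \varphi = \mathrm{id}_{\mathrm{Orb}(U)}$ and $\varphi \circ \psi = \mathrm{id}_{\mathrm{Orb}(V)}$, so $\varphi$ is a bijection and the two orbits have equal cardinality.

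I expect the main obstacle to be precisely the well-definedness step for $\varphi$: reconciling Lemma \ref{dbr-moves}, which is phrased in terms of forward Hurwitz moves on $U$ and inverse moves on $V$, with the need to send an arbitrary orbit element — reached from $U$ by a mixed sequence of moves and inverse moves — to its double reverse, and in particular verifying that every move-sequence expression for the same $W$ produces the same image. That coherence is exactly what the uniqueness in Proposition \ref{rev-eq} supplies, which is why reversible relations are essential and why the analogous statement fails for presentations like the $i,j,k$ presentation of $Q_8$.
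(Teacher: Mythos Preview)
Your proposal is correct and follows essentially the same approach as the paper: build a bijection between the two Hurwitz orbits using the double-reverse operation, with Lemma~\ref{dbr-moves} to show the image lands in the correct orbit and Proposition~\ref{rev-eq} to supply uniqueness. Your framing is arguably tidier---defining $\varphi(W)=W^{\mathrm{dr}}$ intrinsically and then checking it lands in $\mathrm{Orb}(V)$, rather than defining $\varphi$ via a chosen word in the $\sigma_i$ and then proving injectivity---and you are more explicit than the paper about extending Lemma~\ref{dbr-moves} to inverse Hurwitz moves, but the substance is the same.
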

\begin{proof}
Let $O_U$ and $O_V$ be the orbits of $U$ and $V$, respectively. Suppose $T \in O_U$ and $T = \sigma_{p_1} \sigma_{p_2} \ldots \sigma_{p_n} (U)$ with each $p_i \in \{1, 2, \ldots, \ell - 1\}$, and define $\varphi: O_U \rightarrow O_V$ by $\varphi(T) = \sigma^{-1}_{\ell - p_1} \sigma^{-1}_{\ell - p_2} \ldots \sigma^{-1}_{\ell - p_n}(V)$. By Lemma \ref{dbr-moves}, $T$ and $\varphi(T)$ are double reverses.

Assume $A,B \in O_U$, $C \in O_V$, and $\varphi(A) = \varphi(B) = C$. Then there exist  $a_1, a_2, \ldots, a_\ell, b_1, b_2, \ldots, b_\ell \in F_n$ such that
\[ A = \left( a_1, a_2, \ldots, a_\ell \right) \quad \text{and} \quad B = \left( b_1, b_2, \ldots, b_\ell \right), \]
\[ C = \left( a_\ell^*, a_{\ell-1}^*, \ldots, a_1^* \right) = \left( b_\ell^*, b_{\ell-1}^*, \ldots, b_1^* \right). \]
Since $a_i^* \equiv b_i^*$, by Proposition \ref{rev-eq} $a_i \equiv b_i$, $i = 1, \ldots, \ell$. Thus $A = B$, $\varphi$ is injective, and the same process gives an injective map $\varphi': O_V \rightarrow O_U$ as well. This proves $\left|O_U\right| = \left|O_V\right|$.
\end{proof}

\begin{corollary}
\label{dbr-cor}
Suppose $\langle X \mid R \rangle$ is a presentation for $G$ with reversible relations, and let $x_{i_1}, x_{i_2}, \ldots, x_{i_\ell} \in X \cup X^{-1}$. Then $(x_{i_1}, x_{i_2}, \ldots, x_{i_\ell})$ and $(x_{i_\ell}, x_{i_{\ell -1}}, \ldots, x_{i_1})$ have Hurwitz orbits of the same size.
\end{corollary}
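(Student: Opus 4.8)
The plan is to deduce this immediately from Theorem \ref{dbr-hsize}: the only thing that needs checking is that the factorizations $(x_{i_1}, x_{i_2}, \ldots, x_{i_\ell})$ and $(x_{i_\ell}, x_{i_{\ell-1}}, \ldots, x_{i_1})$ are double reverses in the sense defined above. Since $G$ already comes equipped with a presentation $\langle X \mid R \rangle$ with reversible relations, the theorem will then apply directly.

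First I would observe that each letter $x_{i_k} \in X \cup X^{-1}$, viewed as an element of the free group $F(X)$, is a reduced word of length one. The reverse of a one-letter word is the word itself, so $(x_{i_k})^* = x_{i_k}$ for every $k$ — whether $x_{i_k}$ is a generator $x_j$ or a formal inverse $x_j^{-1}$; in the latter case this is just the length-one instance of \eqref{eqn:rev-inv}, which reads $(x_j^{-1})^* = x_j^{-1}$. Next I would take $a_k = x_{i_k}$ for $k = 1, \ldots, \ell$, so that $(x_{i_1}, \ldots, x_{i_\ell}) = (a_1, a_2, \ldots, a_\ell)$ and $(x_{i_\ell}, \ldots, x_{i_1}) = (a_\ell, a_{\ell-1}, \ldots, a_1) = (a_\ell^*, a_{\ell-1}^*, \ldots, a_1^*)$, where the last equality uses $a_k^* = a_k$. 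This is exactly the defining condition for the two factorizations to be double reverses, so Theorem \ref{dbr-hsize} yields that their Hurwitz orbits have the same size.

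There is essentially no obstacle: the corollary is simply the specialization of Theorem \ref{dbr-hsize} to factorizations whose factors are single letters, for which the ``double reverse'' operation collapses to ordinary reversal of the tuple. The only point requiring any care is the bookkeeping convention that the formal inverses $x_j^{-1}$ are treated as letters of $X^{-1}$, so that the reverse operation fixes them — and this is immediate from the conventions fixed at the start of Section \ref{Section:ReverseRelation}.
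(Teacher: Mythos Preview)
Your proof is correct and follows exactly the paper's approach: both arguments observe that single-letter words are their own reverses, so the two factorizations are double reverses and Theorem~\ref{dbr-hsize} applies.
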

\begin{proof}
The two factorizations are double reverses because the reverse of a single letter word is itself.
\end{proof}

\section{Applications to reflection groups}
\label{Section:Applications}

A lot of the research done on the Hurwitz action has been on real reflection groups, and more generally on Coxeter groups. The reflections in these groups exclusively have order $2$. Therefore, by Corollary \ref{reverse}, we find that a reflection factorization $(x_1, x_2, \ldots , x_n)$ in a Coxeter group has the same sized Hurwitz orbit as $(x_n, \ldots , x_2, x_1)$.

The Hurwitz action has also been studied in the context of complex reflection groups called \emph{Shephard groups}. Unlike Coxeter groups, Shephard groups can have generating reflections of order greater than 2. Thus Corollary \ref{reverse} is not enough to guarantee that reversing a factorization of generators preserves Hurwitz orbit size. For each Shephard group there exist integers $p_1, p_2, \ldots, p_n$ and $q_1, q_2, \ldots, q_{n-1}$ such that there is a generating set $\{s_1, s_2, \ldots, s_n\}$ with relations
$s_i^{p_i} = 1$ for $i = 1, 2, \ldots, n$, $s_i s_j = s_j s_i$ if $\left|i - j\right| > 1$, and \[s_is_{i+1}s_is_{i+1} \ldots = s_{i+1}s_is_{i+1}s_i \ldots \quad (q_i \text{ terms on each side}) \] for $i = 1,2,\ldots,n-1$. We see with Remark \ref{remark:presentations} that these relations are reversible, so by Corollary \ref{dbr-cor} reversing any factorization consisting of generators $s_i$ in a Shephard group preserves Hurwitz orbit size. For example, the group $G_6$ mentioned in the introduction is a Shephard group, with parameters $n = 2$, $p_1 = 3$, $p_2 = 2$, and $q_1 = 6$. Yet testing of small cases has led us to make a stronger conjecture about Hurwitz orbit sizes in $G_6$. 

\begin{conjecture}
\label{conjecture:g6}
Let $G_6 = \langle a,b \mid a^3 = b^2 = 1, ababab = bababa \rangle$, and suppose $x_1, x_2, \ldots, x_\ell \in \{a,b, a^{-1}\}$ for some positive integer $\ell$. Then for any permutation $\pi$ of $\{1,2, \ldots, \ell \}$, $(x_1, x_2, \ldots, x_\ell)$ and $(x_{\pi(1)}, x_{\pi(2)}, \ldots, x_{\pi(\ell)})$ have Hurwitz orbits of equal size.
\end{conjecture}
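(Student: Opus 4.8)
The plan is to reduce the conjecture to a single hard case and then attack that case directly. Since $S_\ell$ is generated by the $\ell$-cycle $(1\,2\,\cdots\,\ell)$ together with the transposition $(1\,2)$, and since ``has a Hurwitz orbit of the same size as'' is an equivalence relation, it suffices to prove two things. The first is that cycling a factorization preserves Hurwitz orbit size --- this is exactly Theorem~\ref{thm:cycling}. The second is that swapping the first two factors preserves orbit size: $(x_1, x_2, x_3, \ldots, x_\ell)$ and $(x_2, x_1, x_3, \ldots, x_\ell)$ should have Hurwitz orbits of equal size whenever every $x_i \in \{a, b, a^{-1}\}$. (Any adjacent transposition $(i,i+1)$ is then obtained by cycling the pair at positions $i, i+1$ to the front, swapping, and cycling back, so these two facts together yield the full conjecture.)

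I would handle the swap by cases on the unordered pair $\{x_1, x_2\}$. If $x_1 = x_2$ there is nothing to prove. If $\{x_1, x_2\} = \{a, a^{-1}\}$, then since $a$ and $a^{-1}$ commute the move $\sigma_1$ sends $(a, a^{-1}, x_3, \ldots, x_\ell)$ to $(a^{-1}, a, x_3, \ldots, x_\ell)$ and conversely, so the two factorizations lie in the \emph{same} Hurwitz orbit. If $\{x_1, x_2\} = \{a^{-1}, b\}$, I would use the automorphism $\phi$ of $G_6$ with $\phi(a) = a^{-1}$ and $\phi(b) = b$; this is well defined because inverting the relation $ababab = bababa$ gives $a^{-1}ba^{-1}ba^{-1}b = ba^{-1}ba^{-1}ba^{-1}$. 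Any automorphism of $G_6$ commutes with Hurwitz moves, so applying $\phi$ termwise is a size-preserving bijection between the orbit of a factorization and the orbit of its image; since $\phi$ permutes $\{a, b, a^{-1}\}$ and carries the pair $\{a^{-1}, b\}$ to $\{a, b\}$, this case reduces to the case $\{x_1, x_2\} = \{a, b\}$.

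What remains --- and this is the main obstacle --- is to show that for every $k \ge 0$ and every $\mathbf{z} = (z_1, \ldots, z_k) \in \{a, b, a^{-1}\}^k$, the factorizations $(a, b, \mathbf{z})$ and $(b, a, \mathbf{z})$ have Hurwitz orbits of equal size. This step is strictly harder than the rest: the size-preserving operations already in hand --- cycling, the double reverse of Corollary~\ref{dbr-cor}, global conjugation (Lemma~\ref{lem:conjugation}), and the automorphism $\phi$ --- together realize only the dihedral subgroup of $S_\ell$ plus transpositions of an adjacent $a,a^{-1}$ pair, which is not enough to move a $b$ past an $a$ in general (for instance, it does not obviously relate $(a,a,b,b)$ to $(a,b,a,b)$). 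My first line of attack would be to build an explicit bijection between the two orbits by hand, exploiting two special features of $G_6$: the identity $ba = a^{-1}(ab)a$, which makes the two products conjugate, and the fact that $(ab)^3$ is central in $G_6$, which tightly restricts how words in $\{a,b,a^{-1}\}$ conjugate one another and hence how the braid group can act on such factorizations. A second possibility is an induction on $k$ that strips off $z_k$, tracking the two orbits together with a compatible bijection between them and reducing the inductive step to a comparison of the braid-group stabilizers of $(a,b,\mathbf{z})$ and $(b,a,\mathbf{z})$; the delicate point there is that appending a letter can change orbit structure unpredictably, so the inductive hypothesis would have to carry enough information to survive it. I expect that obtaining this final comparison uniformly in $k$ is where essentially all the difficulty lies.
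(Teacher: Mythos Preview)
The paper does not prove this statement: it is stated as a conjecture, motivated by computer experiments on small cases, and left open. There is therefore no proof in the paper to compare your proposal against.

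Your reduction is sound as far as it goes. Generating $S_\ell$ by the $\ell$-cycle and an adjacent transposition is correct, the cycling step is exactly Theorem~\ref{thm:cycling}, the case $\{x_1,x_2\}=\{a,a^{-1}\}$ is fine because $a$ and $a^{-1}$ commute so $\sigma_1$ already swaps them, and the automorphism $\phi(a)=a^{-1}$, $\phi(b)=b$ is genuinely well defined (invert the relation $ababab=bababa$ and use $b^{-1}=b$ to get $a^{-1}ba^{-1}ba^{-1}b=ba^{-1}ba^{-1}ba^{-1}$). So everything does reduce to the single assertion that $(a,b,\mathbf{z})$ and $(b,a,\mathbf{z})$ have Hurwitz orbits of equal size for every tail $\mathbf{z}\in\{a,b,a^{-1}\}^k$.

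But you do not prove that assertion; you describe two possible lines of attack and explicitly flag it as the main obstacle. That is precisely where the paper stops too: none of the tools the paper develops (cycling, flip-and-invert, double reverses for reversible presentations, global conjugation) suffice to move a $b$ past an $a$ in an arbitrary context, and the authors record the statement as a conjecture because they do not have this step. Your proposal is a reasonable program, not a proof, and the gap you name is exactly the gap that makes this a conjecture rather than a theorem.
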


With regard to this property, $G_6$ may be unique even among Shephard groups. For example, in $G_4 = \langle a,b \mid a^3 = b^3 = 1, aba = bab \rangle$, $(a,a,b,b)$ has a Hurwitz orbit of size $36$ while $(a,b,a,b)$ has an orbit of size $27$.

\end{document}